\newtheorem{thm}{Theorem}[section]
 \newtheorem{cor}[thm]{Corollary}
 \newtheorem{lem}[thm]{Lemma}
 \newtheorem{prop}[thm]{Proposition}
 \theoremstyle{definition}
 \theoremstyle{remark}
 \numberwithin{equation}{section}
\def\be#1 {\begin{equation} \label{#1}}
\newcommand{\ee}{\end{equation}}
\newcommand\<{\langle}
\renewcommand\>{\rangle}
\begin{document}

\author{Pierre Germain}
\address{}
\curraddr{}
\email{}

\author{Nader Masmoudi}
\address{}
\curraddr{}
\email{}

\author{Beno\^it Pausader}
\address{}
\curraddr{}
\email{}

\title{Non-neutral global solutions for the electron Euler-Poisson system in $3D$.}

\subjclass[2000]{35Q31 ; 35Q35}

\keywords{}

\begin{abstract}
We prove that small smooth irrotational but charged perturbations of a constant background are global and go back to equilibrium in the $3D$ electron Euler-Poissson equation.
\end{abstract}

\maketitle

\begin{quote}
\footnotesize\tableofcontents
\end{quote}

\section{Introduction}

\subsection{Presentation of the equation}

Consider a three-dimensional plasma composed of fixed ions with uniform density, and a gas of moving electrons. This situation can be described by the Euler-Poisson equation, which 
couples a compressible gas to an electrostatic field. 
Letting $u$ be the velocity of the electron gas and $\rho$ its density, it reads after a simple rescaling
\begin{equation}\label{EP1}
(EP) \quad
\left\{ \begin{array}{l} \partial_t \rho + \nabla \cdot(\rho u) = 0 \\
\rho(\partial_t u + u \cdot \nabla u) = -\rho \nabla \rho + \rho \nabla \Phi \\
         \Delta \Phi = \rho - 1.         \end{array}
\right.
\end{equation}
We took for simplicity the pressure law $p(\rho) = \frac{1}{2} \rho^2$; the analysis is similar for other pressure laws. We make the standing assumption that the fluid is 
irrotational
\begin{equation}\label{IrHyp}
\nabla \times u = 0;
\end{equation}
this condition is of course conserved by the flow of \eqref{EP1}. Our aim is to understand the stability of the obvious stationary state
\begin{equation*}
u = 0 \quad,\quad \rho = 1
\end{equation*}
under perturbations in $\rho$ which do not have mean zero (i.e. are not electrically neutral).

\medskip

It has been proposed that the non-neutral assumption could have important consequences for the asymptotic dynamics of the perturbation. This is made plausible by the following 
remark: consider for each time a ball centered at the center of mass and containing one half of the total electric charge. Far away from the ball, the action of the electric field 
generated by the electrons inside this ball is similar to the action of a single point charge at the center of mass of the same total electric charge. Recall now that particles 
moving in a Coulombian electric field experience asymptotically a logarithmic correction to their free trajectory~\cite{RS}. 
Based on this analogy, one could expect a correction to the 
linear scattering of the perturbation, which had been shown for electrically neutral perturbations (for which the electric field at infinity would be that of a dipole and hence integrable along 
the trajectories).

It turns out that this picture is not accurate, at least for irrotational perturbations. To the best of our understanding, this is due to a combination of the following three facts
\begin{enumerate}
\item The analogy with point particle is not completely accurate.
\item The generated perturbation of the electric field oscillates in time along with the particles. However, the nonlinearities come from inertial terms (convection and pressure) and therefore oscillate  ``out of phase'' with the density and velocity fields (in other words, the nonlinear terms oscillate in a non resonant way with respect to the linear terms). This allows to use a normal form transformation and partially cancel their long-time influence (at least cancel the influence of the ``short-range'' part of the nonlinearity involving $\beta$).
\item The electric field away from the origin has constant amplitude and oscillates (when there is a motion). Its main effect is to periodically repel and attract the particles to the origin. However, since its strength is not uniform, the net effect is to create a repulsive force. To understand the effect of this force, assume that the center of mass remains at the origin. When moving away from the origin, the particles experience a weaker force which takes more time to counteract their inertia and invert their velocity, so they move a long distance away from the origin; in contrast, when the particles move closer to the origin, they encounter a stronger force which sends them back faster and they move a shorter distance towards the origin. However, this dispersion effect depends not on the amplitude of the electric field (which decays like $1/\vert x\vert$), but on the gradient of this field (which decays like $1/\vert x\vert^2$ and is thus integrable). Therefore, it has no long term effect either and we recover linear scattering.
\end{enumerate}

\subsection{Main result}

Our main result is that a constant equilibrium of charged electrons is stable, even under non neutral (but still irrotational) perturbations. We denote
\begin{equation}\label{Charge}
\quad \tilde{Q}=\int_{\mathbb{R}^3}\left[\rho_0(x)-1\right]dx
\end{equation}
(which can thus be taken non zero) for the charge of the perturbation. This extends the work of 
Guo \cite{Guo} who assumed that $\tilde{Q}=0$.

\begin{thm}\label{MainThm1}
There exists $\delta_0$ such that if $(\rho_0,v_0)\in C^\infty_c$ satisfy
\begin{equation*}
\hbox{curl}[v_0]=0,\quad \Vert (\rho_0-1,v_0)\Vert_{H^{10}}+\Vert(\rho_0-1,v_0)\Vert_{W^{5,1}}\le \delta_0,
\end{equation*}
then there exists a unique global solution of \eqref{EP1} which converges to equilibrium in the sense that
\begin{equation*}
\Vert (\rho(t)-1,v(t))\Vert_{L^\infty_x}\to 0,\quad\text{as }t\to+\infty.
\end{equation*}
Furthermore, it scatters in a sense which will be made precise in Corollary~\ref{croco}.
\end{thm}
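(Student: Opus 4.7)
\emph{Plan of proof.} The strategy is to exploit the Klein-Gordon character of the linearization and close a bootstrap combining high Sobolev regularity with a dispersive $Z$-norm. Setting $\sigma = \rho - 1$ and using irrotationality to write $v = \nabla\psi$, elimination of $\Phi$ via $\Delta\Phi = \sigma$ yields $\partial_{tt}\sigma = (\Delta - 1)\sigma + \mathrm{NL}$, so the linear part has dispersion relation $\omega(\xi) = \sqrt{1+|\xi|^2}$. Importantly, the apparent $|\xi|^{-2}$ singularity from $\Delta\Phi = \sigma$ is absorbed into the linear propagator when one works with $(\sigma,\psi)$ rather than $(\sigma,v)$, while the quadratic nonlinearities $\nabla\cdot(\sigma\nabla\psi)$ and $\tfrac12 |\nabla\psi|^2$ carry derivative prefactors that supply low-frequency vanishing. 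I would pass to a complex diagonalizing unknown $U$ with $(\partial_t + i\langle\nabla\rangle)U = \mathcal{Q}(U,\bar U) + \mathcal{C}(U,\ldots)$ and work with the profile $f(t) := e^{it\langle\nabla\rangle}U(t)$.

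The bootstrap would assume, for some small $\alpha>0$ and appropriate $N$,
\[
\|U(t)\|_{H^{10}} + \|\widehat{f}(t)\|_{Z} + (1+t)^{-\alpha}\,\|\langle\xi\rangle^N \partial_\xi \widehat{f}(t)\|_{L^2_\xi} \lesssim \delta_0,
\]
where $Z$ is an $L^\infty_\xi$-type norm strong enough to yield the sharp Klein-Gordon decay $\|U(t)\|_{L^\infty_x}\lesssim \delta_0 (1+t)^{-3/2}$ by stationary phase. The $H^{10}$ estimate then closes by standard energy arguments, since $t^{-3/2}$ is integrable. For the $Z$-norm and weighted bounds one performs a Shatah-type normal form transformation, trading the non-resonant quadratic phases $\Phi_{\epsilon_1\epsilon_2}(\xi,\eta) = \langle\xi\rangle - \epsilon_1\langle\xi-\eta\rangle - \epsilon_2\langle\eta\rangle$ (those bounded below) for cubic contributions, and handles the residual resonant quadratic terms via the space-time resonance method, exploiting transversality of the time-resonant set $\{\Phi=0\}$ and the space-resonant set $\{\nabla_\eta\Phi = 0\}$ to integrate by parts in $\eta$.

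The main obstacle, and the new feature of this problem, is the non-neutrality hypothesis $\tilde{Q}\neq 0$: it forces $\widehat{f}(t,0)\neq 0$ and thus rules out the low-frequency vanishing $\widehat{f}(t,\xi)=O(|\xi|^a)$ on which Guo's neutral analysis relied. After applying $\partial_\xi$ to the oscillatory Duhamel integrals, one generates boundary contributions concentrated near $\xi = 0$, $\eta = 0$, and $\xi = \eta$ whose naive size suggests a Coulomb-type long-range correction to scattering. The heart of the proof is to verify quantitatively the heuristic sketched in the introduction: the effective force is governed by the \emph{gradient} (decaying like $|x|^{-2}$) of the asymptotic Coulomb field rather than by its amplitude (decaying only like $|x|^{-1}$), and is therefore integrable along trajectories, so that no modification of the linear scattering is required. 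Concretely, this means choosing $Z$ to tolerate $\widehat{f}(t,0)\neq 0$ while still providing sufficient weighted control, and carefully matching the derivative prefactors of $\mathcal{Q}$ against the loss at $\xi = 0$ inside each dyadic piece of the space-time resonance decomposition. Once this low-frequency bookkeeping is controlled uniformly in $t$, global existence and the pointwise decay $\|(\rho-1,v)\|_{L^\infty_x}\to 0$ follow, and a scattering statement for $f(t)$ in the $Z$-topology -- to be made precise in Corollary~\ref{croco} -- falls out of the bootstrap.
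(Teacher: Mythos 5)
Your overall framework (Klein--Gordon diagonalization, profile, normal form, energy-plus-decay bootstrap) is in the same spirit as the paper's, and you correctly identify the key heuristic: the derivative prefactors in the quadratic terms must make up for the absence of low-frequency vanishing when $\tilde Q\neq 0$. But the concrete bootstrap you propose does not get off the ground, and the step where it fails is exactly the crux of the problem. Because of the $|\nabla|^{-1}$ in the diagonalized unknown (the paper's $\alpha=\langle\nabla\rangle|\nabla|^{-1}(\rho-1)+i|\nabla|^{-1}\mathrm{div}\,u$; the same singular multiplier appears if you diagonalize your $(\sigma,\psi)$ system), non-neutral data force $\widehat{U}(0,\xi)\sim \tilde Q\,|\xi|^{-1}$ near $\xi=0$. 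Hence an $L^\infty_\xi$-type $Z$-norm on $\widehat f$ and the weighted norm $\Vert\langle\xi\rangle^N\partial_\xi\widehat f\Vert_{L^2_\xi}$ are already infinite at $t=0$ (in $3D$, $|\xi|^{-2}\notin L^2_{\rm loc}$), and the sharp decay $\Vert U(t)\Vert_{L^\infty_x}\lesssim (1+t)^{-3/2}$ you want to extract from $Z$ is simply false here: the free evolution of the charged low-frequency part decays only like $t^{-1}$ in $L^\infty_x$, and like $t^{-\frac32(1-\frac2p)}$ only for $p<3$, which is the non-integrability the paper points out. Saying you will ``choose $Z$ to tolerate $\widehat f(t,0)\neq 0$'' leaves precisely this point unresolved: the issue is not that $\widehat f$ is nonzero at the origin but that it is singular there, and no finite, propagating replacement for the weighted/pointwise-Fourier norms is offered.

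The paper's resolution is different and more elementary. It subtracts the free evolution of the low-frequency part of the data, $\beta=\alpha-e^{it\langle\nabla\rangle}\chi^Q$ with $\chi^Q=P_{\le1}\mathfrak{Re}[\alpha(0)]$, observes via \eqref{BdChi} that $e^{it\langle\nabla\rangle}\chi^Q$ decays non-integrably but $\nabla e^{it\langle\nabla\rangle}\chi^Q$ decays integrably in all relevant $L^p$, and exploits that every quadratic symbol in \eqref{Symb2} carries a factor $|\xi_1|$. The bootstrap for $\beta$ is then closed in unweighted norms only ($H^N$ plus a $(1+t)^{6/5}$-weighted $B^\sigma_{10,2}$ norm, so only the non-sharp rate $t^{-6/5}$ is needed), by a pure normal form: the Klein--Gordon quadratic phases $\phi_\varepsilon$ never vanish, degenerating only like $\langle L\rangle^{-1}$ at high frequency and small angle, a loss absorbed by the tame bilinear Lemma \ref{LemBil}; consequently there are no residual time-resonant quadratic terms, and no space-time resonance analysis or integration by parts in $\eta$ is required. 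To rescue your plan you would have to build this subtraction (or an equivalent renormalization of the singular low-frequency part) into your function spaces, or redesign the $Z$ and weighted norms so that an $|\xi|^{-1}$ singularity at the origin is admissible and propagated; as written, the proposed estimates cannot close.
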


Again, as in Guo \cite{Guo}, note the contrast with the result in the absence of electric field in \cite{Sid}.
This theorem suggests that the neutral assumption made in \cite{GerMas,IonPau} might be removed. Recall that the neutral assumption is not necessary either to get small-data/global existence for the Euler-Poisson equation for the ions \cite{GuoPau}, which corresponds to the large-time behavior of the system, but the ion case is more transparent since the non-neutral assumption has no implication on the decay of free solutions.


Our analysis relies on works on quasilinear dispersive equations, especially on normal form transform methods, starting from the work of Shatah \cite{Sha} and following recent 
developments of the space-time resonance method in \cite{GerMasSha,GerMasSha2,GNT2,GNT,IonPau}.

The main consequence of the non neutral assumption $\tilde{Q}\ne 0$ is that the solution to the linearized equation is no longer integrable in time. 
However, we remark that its derivative still is. Since the quadratic nonlinearities always involve at least one derivative, the main point is then to systematically track down
the extra decay provided by this derivative term, thus giving a fairly simple proof of Theorem \ref{MainThm1}.

\section{Notations}

We adopt the following notations
\begin{itemize}
\item $A \lesssim B$ if $A \leq C B$ for some implicit constant $C$. The value of $C$ may change from line to line. We note $A\simeq B$ if $A\lesssim B\lesssim A$.
\item If $f$ is a function over $\mathbb{R}^d$ then its Fourier transform, denoted $\widehat{f}$, or $\mathcal{F}(f)$, is given by
\begin{equation*}
\widehat{f}(\xi) = \mathcal{F}f (\xi) = \frac{1}{(2\pi)^{d/2}} \int e^{-ix\xi} f(x) \,dx \;\;\;\;\mbox{thus} \;\;\;\;f(x) = \frac{1}{(2\pi)^{d/2}} \int e^{ix\xi} 
\widehat{f}(\xi) \,d\xi.
\end{equation*}
(in the text, we systematically drop the constants such as $\frac{1}{(2 \pi)^{d/2}}$ since they are not relevant).
\item The Fourier multiplier with symbol $m(\xi)$ is defined by
\begin{equation*}
m(i\nabla)f = \mathcal{F}^{-1} \left[m \mathcal{F} f \right].
\end{equation*}
\item The Littlewood-Paley projector $P_{\le N}$, $P_{\ge N/2}$ and $P_N$ are defined as the Fourier multipliers of symbols
\begin{equation*}
\chi(\xi/N),\quad (1-\chi(\xi/N))\hbox{ and }\chi(\xi/(2N))-\chi(\xi/N)
\end{equation*}
for $\chi\in C^\infty_c(\mathbb{R}^3)$ such that $\chi(x)=1$ when $\vert x\vert\le 1$ and $\chi(x)=0$ when $\vert x\vert\ge 2$. In what follows, 
sums over capital letters are understood to be over dyadic numbers.
\item The bilinear Fourier multiplier with symbol $m$ is given by
\begin{equation}\label{Pseu}
 T_m[f,g](x) \overset{def}{=} \int e^{ix(\xi+\eta)} \widehat{f}(\xi) \widehat{g}(\eta) m(\xi,\eta)\, d\xi d\eta 
= \mathcal{F}^{-1} \int m(\xi-\eta,\eta) \widehat{f}(\xi-\eta) \widehat{g}(\eta)\,d\eta.
\end{equation}
We also define $\tilde{T}_m$ to denote an operator ``of the form'' $T_m$ in the sense that
\begin{equation*}
\tilde{T}_m[f,g]\in\{T_m[f,g],T_m[\overline{f},g],T_m[f,\overline{g}],T_m[\overline{f},\overline{g}]\}.
\end{equation*}
\item The japanese bracket $\langle \cdot \rangle$ stands for $\langle x \rangle = \sqrt{1 + x^2}$.
\item The Riesz transform is defined as the real operator $R_j=\vert\nabla\vert^{-1}\partial_j$.
\item The Besov spaces are defined by their norms as follows
\begin{equation*}
\Vert f\Vert_{B^\sigma_{p,q}}^q=\sum_{N\in 2^\mathbb{Z}}\langle N\rangle^{q\sigma}\Vert P_Nf\Vert_{L^p}^q.
\end{equation*}
\end{itemize}

\section{Preliminary steps}

In order to investigate the stability of $u=0$, $\rho=1$, using \eqref{IrHyp}, we introduce the new unknown function
\begin{equation}\label{NewU}
\alpha = \<\nabla\>\vert\nabla\vert^{-1} (\rho-1) + i \vert\nabla\vert^{-1}\hbox{div}[u].
\end{equation}
The original unknowns can be recovered by the formulas
\begin{equation*}
\rho-1=\vert\nabla\vert\langle\nabla\rangle^{-1}\frak{Re}[\alpha] \quad \mbox{and} \quad u_j = - R_j\frak{Im}[\alpha].
\end{equation*}
The system \eqref{EP1} becomes
\begin{equation}\label{EP2}
\begin{split}
(\partial_t-i\langle\nabla\rangle)\alpha&= - \frac{i}{4}\sum_{j=1}^3R_j\langle\nabla\rangle\big[\frac{|\nabla|}{\langle\nabla\rangle}(\alpha+\overline{\alpha})
\cdot R_j(\alpha-\overline{\alpha})\big]-\frac{i}{8}\sum_{j=1}^3|\nabla|\big[R_j(\alpha-\overline{\alpha})\cdot R_j(\alpha-\overline{\alpha})\big].
\end{split}
\end{equation}
The above right-hand side is a sum of quadratic terms in $\alpha$ and $\bar \alpha$:
\begin{equation*}
RHS = F(\alpha,\alpha) \quad \mbox{where} \quad F(f,g) = Q_1 (f,g) + Q_2 (f,\bar g) + Q_3 (\bar f,\bar g).
\end{equation*}
The bilinear operators $Q_1$, $Q_2$, and $Q_3$ are pseudo-products as in \eqref{Pseu} whose symbols are linear combinations of the following multipliers
\begin{equation}\label{Symb2}
\begin{split}
m_p(\xi_1,\xi_2)&=\vert\xi_1\vert\frac{\langle\xi_1+\xi_2\rangle}{\langle\xi_1\rangle}\frac{\left[ \xi_2\cdot(\xi_1+\xi_2)\right]}{\vert\xi_2\vert\vert\xi_1+\xi_2\vert}=\vert\xi_1\vert\langle\xi_2\rangle\frac{\langle\xi_1+\xi_2\rangle}{\langle\xi_1\rangle\langle\xi_2\rangle}\tilde{m}_p(\xi_1,\xi_2)\\
m_t(\xi_1,\xi_2)&=\vert\xi_1\vert\left[\frac{\xi_1\cdot(\xi_1+\xi_2)}{\vert\xi_1\vert\vert\xi_1+\xi_2\vert}\frac{\xi_1\cdot\xi_2}{\vert\xi_1\vert \vert\xi_2\vert}\right]=\vert\xi_1\vert\tilde{m}_t(\xi_1,\xi_2)
\end{split}
\end{equation}
or their symmetric $m_p(\xi_2,\xi_1)$ and $m_t(\xi_2,\xi_1)$. In \eqref{Symb2}, $m_t$ corresponds to the second term in \eqref{EP2} after using that $\vert\nabla\vert=-\sum_jR_j\partial_j$.

We now isolate the effect of the electric field at infinity. Actually, for simplicity in our analysis, we need to replace $\tilde{Q}$ by $Q$ in \eqref{Q} and introduce $\chi^Q$ as follows\footnote{but one should essentially think of $\chi^Q$ as $Q\vert\nabla\vert^{-1}\chi$ for $\chi$ a nice bump function such that $\int\chi dx=1$.}
\begin{equation*}
\beta(t) \overset{def}{=} \alpha(t) - e^{it\<\nabla\>} \chi^Q \quad \mbox{where} \quad \chi^Q \overset{def}{=} P_{\le 1}\frak{Re}\left[\alpha(0)\right].
\end{equation*}
It solves the system
\begin{equation}\label{EP3}
\begin{split}
\left(\partial_t - i \< \nabla \>\right) \beta&= F(e^{it\<\nabla\>} \chi^Q,e^{it\<\nabla\>} \chi^Q) + \left[F(e^{it\<\nabla\>} \chi^Q,\beta)+F(\beta,e^{it\<\nabla\>} \chi^Q)\right] + F(\beta,\beta)\\
&=I+II+III.
\end{split}
\end{equation}
which is forced by $e^{it\<\nabla\>} \chi^Q$ satisfying (due to~(\ref{crane})):
\begin{equation}\label{BdChi}
\begin{split}
\Vert e^{it\<\nabla\>} \chi^Q\Vert_{B^0_{p,2}}&\lesssim Q(1+t)^{-\frac{3}{2}(1-\frac{2}{p})},\quad 2\le p<3\\
\Vert \nabla e^{it\<\nabla\>} \chi^Q\Vert_{B^0_{p,2}}&\lesssim Q(1+t)^{-\frac{3}{2}(1-\frac{2}{p})},\quad 2\le p<+\infty\\
\Vert e^{it\<\nabla\>} \chi^Q\Vert_{H^N}&\lesssim Q\\
\end{split}
\end{equation}
uniformly in $t\ge 0$, where
\begin{equation}\label{Q}
Q:=\Vert P_{\le 1}\rho(0)\Vert_{L^1}\\ 
\end{equation}
is a substitute for $\tilde{Q}$ in \eqref{Charge}.

Fix $\sigma\ge2$ and $N\ge \sigma+7$.
We define our main norm for the global control:
\begin{equation}\label{Norm}
\begin{split}
\Vert f\Vert_Y&:=\Vert f\Vert_{W^{\sigma+2,10/9}_x}+\Vert f\Vert_{H^N_x}\\
\Vert \beta\Vert_{X_T}&:=\sup_{0\le t\le T}\left[(1+t)^{6/5}\Vert\beta(t)\Vert_{B^{\sigma}_{10,2}}+\Vert \beta(t)\Vert_{H^N_x}\right].
\end{split}
\end{equation}
Using \eqref{crane}, we see that, for all $T$
\begin{equation}\label{LinEst}
\Vert e^{it\langle\nabla\rangle}f\Vert_{X_T}\lesssim \Vert f\Vert_Y
\end{equation}
uniformly in $T$.
Local existence in $X_T$ follows from energy estimates, therefore we see that Theorem \ref{MainThm1} will be a consequence of the following Proposition.

\begin{prop}\label{MainProp}
There exists $\delta>0$ such that if $\beta\in C([0,T]:H^N)$ satisfies \eqref{EP3} on $[0,T]$ and
\begin{equation*}
Q+\sup_{0\le t\le T}\Vert \beta(t)\Vert_{H^5}\le\delta,
\end{equation*}
then 
\begin{equation*}
\Vert\beta\Vert_{X_T}\lesssim \Vert\beta(0)\Vert_Y+(Q+\Vert\beta\Vert_{X_T})^2
\end{equation*}
uniformly in $T$.
\end{prop}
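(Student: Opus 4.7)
The plan is to estimate the two pieces of $\|\beta\|_{X_T}$ separately: the Sobolev piece $\|\beta\|_{H^N}$ by a standard quasilinear energy estimate on \eqref{EP3}, and the weighted dispersive piece $(1+t)^{6/5}\|\beta(t)\|_{B^\sigma_{10,2}}$ by Duhamel's formula together with the linear dispersive bound $\|e^{i\tau\<\nabla\>}G\|_{B^\sigma_{10,2}}\lesssim (1+|\tau|)^{-6/5}\|G\|_{B^{\sigma+2}_{10/9,2}}$ underlying \eqref{LinEst}. Throughout, the driving theme is the one highlighted in the introduction: every bilinear interaction in $F$ carries a free derivative $|\xi_1|$ (or $|\xi_2|$) in its symbol \eqref{Symb2}, and when one argument is $e^{is\<\nabla\>}\chi^Q$ placing that derivative on it is decisive, since \eqref{BdChi} extends the range of $L^p$-decay for $\nabla e^{is\<\nabla\>}\chi^Q$ all the way to $p<\infty$, unlocking the full $(1+s)^{-3/2}$ decay in $L^\infty$.

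For the $H^N$ piece I apply $\langle\nabla\rangle^N$ to \eqref{EP3} and pair with $\langle\nabla\rangle^N\beta$; the skew-Hermitian $i\<\nabla\>\beta$ drops, leaving $\tfrac12\tfrac{d}{dt}\|\beta\|_{H^N}^2=\Re\langle\langle\nabla\rangle^N\beta,\langle\nabla\rangle^N(I+II+III)\rangle$. For $III=F(\beta,\beta)$ I use the standard symmetrization of $m_p+m_p(\xi_2,\xi_1)$ and $m_t+m_t(\xi_2,\xi_1)$, which converts the apparently top-order piece into a commutator controlled by $\|\beta\|_{W^{1,\infty}}\|\beta\|_{H^N}^2$; for $I$ and $II$, Moser-type product bounds together with \eqref{BdChi} yield contributions of size $Q^2(1+s)^{-3/2}$ and $Q\|\beta\|_{H^N}(1+s)^{-3/2}$. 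Since Besov embedding ($\sigma\ge 2$) gives $\|\beta(s)\|_{W^{1,\infty}}\lesssim\|\beta(s)\|_{B^\sigma_{10,2}}\lesssim (1+s)^{-6/5}\|\beta\|_{X_T}$, and the low-frequency support of $\chi^Q$ combined with the extended range in \eqref{BdChi} gives $\|\nabla e^{is\<\nabla\>}\chi^Q\|_{L^\infty}\lesssim Q(1+s)^{-3/2}$, all relevant time weights are integrable and Gr\"onwall produces $\|\beta(t)\|_{H^N}\lesssim \|\beta(0)\|_{H^N}+(Q+\|\beta\|_{X_T})^2$ uniformly in $t\in[0,T]$.

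For the dispersive piece, Duhamel and the dispersive bound reduce matters to estimating $I,II,III$ in dual norms so that $\int_0^t(1+|t-s|)^{-6/5}(\cdot)\,ds\lesssim (1+t)^{-6/5}$. The cleanest route is a quadratic normal-form reduction: for each of the three Klein-Gordon sign combinations in $F$, the associated phase $\Phi$ is non-resonant, with worst-case high-frequency degeneracy $|\Phi|\sim 1/\max(\<\xi\>,\<\eta\>)$ exactly cancelled by the $|\xi_1|$ factor in \eqref{Symb2}, so $m/\Phi$ remains a bounded Coifman--Meyer multiplier. The boundary terms produced by the normal form are quadratic expressions that are either Schwartz in time for $I$ (two copies of low-frequency $e^{it\<\nabla\>}\chi^Q$ give $Q^2(1+t)^{-3}$ in $L^\infty$ once the derivative is placed) or controlled pointwise by $(1+t)^{-12/5}\|\beta\|_{X_T}^2$ for $III$, both sitting comfortably within the $(1+t)^{-6/5}$ budget. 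The cubic interior terms, obtained by substituting \eqref{EP3} for $\partial_s\beta$, are estimated via triple H\"older at $L^{10/3}$: interpolation between the $L^2$ and $L^{10}$ halves of $X_T$ gives $\|\beta(s)\|_{L^{10/3}}\lesssim(1+s)^{-3/5}\|\beta\|_{X_T}$, hence $\|\beta^3(s)\|_{L^{10/9}}\lesssim(1+s)^{-9/5}\|\beta\|_{X_T}^3$, well integrable and absorbed into the quadratic right-hand side by smallness.

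The main obstacle is precisely this normal-form analysis of $III=F(\beta,\beta)$: a naive H\"older factorization $L^{10/9}\hookleftarrow L^{10}\cdot L^{5/4}$ is blocked because $\|\beta\|_{L^{5/4}}$ is not controlled by $X_T$, and the dispersive decay of $\beta\beta$ in $L^5$ sits in the wrong space for the linear dispersive inequality. Verifying that each normal-form symbol $m/\Phi$ falls in the Coifman--Meyer class uniformly in frequency---the delicate regime being the collinear high-frequency cone where $\Phi$ degenerates and the $|\xi_1|$ gain from \eqref{Symb2} is essential to compensate---is where the ``simpler proof'' advertised in the introduction is realized.
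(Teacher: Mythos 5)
Your overall architecture (energy estimate for the $H^N$ part, Duhamel plus a quadratic normal form for the weighted $B^\sigma_{10,2}$ part) is the same as the paper's, but the step on which everything hinges is wrong as stated: the claim that each $m/\phi_\varepsilon$ ``remains a bounded Coifman--Meyer multiplier''. The phase \eqref{Phase} does not degenerate like $1/\max(\<\xi\>,\<\eta\>)$; the correct lower bound (used in the paper) is $\vert\phi_\varepsilon\vert\gtrsim\langle L\rangle\left[\theta^2+\langle L\rangle^{-2}\right]$, where $L$ is the \emph{minimum} of the three frequencies and $\theta$ the angle, so division by $\phi_\varepsilon$ costs powers of the low frequency, and every $\xi$- or $\eta$-derivative of $1/\phi_\varepsilon$ degenerates further like $[\theta+L^{-1}]^{-1}$ near the collinear region. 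Moreover the derivative factor $\vert\xi_1\vert$ in \eqref{Symb2} sits on whichever input the nonlinearity dictates --- in the worst interaction, which the paper flags explicitly, it is $e^{it\<\nabla\>}\chi^Q\cdot\vert\nabla\vert P_{\ge1}e^{it\<\nabla\>}b$, i.e.\ the derivative lands on the \emph{high}-frequency factor and cancels nothing at low frequency. This is exactly why the paper needs Lemma \ref{LemBil}, proved by a kernel/angular analysis, giving H\"older bounds for $T_{m/\phi_\varepsilon}$ only up to a tame loss $H(1+L)^5$; without such a substitute your normal-form terms cannot be estimated by plain H\"older, and the bookkeeping of where derivatives are lost is the heart of the proof, not a routine verification.

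A second, related gap: you never treat the boundary term of the normal form coming from $II$, i.e.\ $T_{m/\phi_\varepsilon}[e^{it\<\nabla\>}\chi^Q,\beta(t)]$ (you only discuss the $I$ and $III$ boundary terms). In the paper this is the tightest estimate in the whole argument: since the symbol puts a full derivative on $\beta$ and $X_T$ only controls $\sigma$ derivatives at $L^{10}$, one must interpolate with $H^N$, which degrades the decay of $\beta$ to $(1+t)^{-7/10}$, while the low-frequency sum over $P_M e^{it\<\nabla\>}\chi^Q$ in $L^\infty$ yields only $(1+t)^{-1/2}$; the budget closes exactly because $\tfrac12+\tfrac7{10}=\tfrac65$. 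Your proposal, by assuming a lossless multiplier bound, never confronts this borderline balance. Similarly, your cubic estimate via triple H\"older at $L^{10/3}$ covers only the pure $\beta^3$ interactions; the mixed cubic terms containing $\chi^Q$ (which, underived, decays only like $(1+t)^{-1}$ in $L^\infty$ and not at all in $L^2$-based norms) require the quantitative bounds on $e^{is\<\nabla\>}\partial_s b$ of Lemma \ref{ControlDS} together with the tame bilinear lemma. The $H^N$ part of your argument is closer to workable --- the top-order underived $\chi^Q$ term can indeed be handled by symmetrization/integration by parts, which is morally what the paper's adapted energy $E_N$ with the $Z'$-weights $(M^{3/4}+M^{4/3})$ accomplishes in physical variables --- but as written it glosses over the nonlocal operators $R_j\<\nabla\>$, $\vert\nabla\vert$ sitting outside the products in \eqref{EP2}, which is precisely why the paper reverts to $(\rho,v)$ and cites the Ionescu--Pausader energy scheme.
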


In the proof, we have decided to use Besov spaces instead of more classical spaces in order to have a simple access to the estimates in Lemma \ref{LemBil}. 
More elaborate harmonic analysis techniques probably allow to replace Besov spaces with Lebesgue spaces. In any case, the difference between the two should be thought of as 
inessential to the proof. Also, we have made some effort to quantify the number of derivatives needed and to keep it reasonably low (around $10$), notably through a ``tame'' estimate\footnote{tame in the sense that most of the loss of derivative is on the low frequency term} on bilinear operators in Lemma \ref{LemBil}. 
A slightly more efficient analysis could somewhat reduce this number, but to make it close to the number of derivatives in the physical energy would require significantly stronger results.

\section{Proof of Proposition \ref{MainProp}}

We introduce the linear profile
\begin{equation*}
b(t)=e^{-it\langle\nabla\rangle}\beta(t).
\end{equation*}
This natural unknown is only affected by the nonlinearity
\begin{equation}\label{dtB}
\partial_tb=e^{-it\langle\nabla\rangle}\left[I+II+III\right].
\end{equation}
Using Duhamel formula, we see that
\begin{equation}\label{beta}
\beta(t)=e^{it\langle\nabla\rangle}\left[\beta(0)+\mathcal{N}\right]
\end{equation}
where $\mathcal{N}$ is a finite sum of operators of the form
\begin{equation}\label{DefOfI}
\begin{split}
\mathcal{I}_{\varepsilon,m}[c_1,c_2]&=\int_0^te^{-is\langle\nabla\rangle}T_m[\mathcal{C}^{\varepsilon_1}e^{is\langle\nabla\rangle}c_1(s),\mathcal{C}^{\varepsilon_2}e^{is\langle\nabla\rangle}c_2(s)]ds\\
&=\mathcal{F}^{-1}\int_0^t\int_{\mathbb{R}^3}e^{is\phi_\varepsilon(\xi-\eta,\eta)}m(\xi-\eta,\eta)\widehat{\mathcal{C}^{\varepsilon_1}c_1}(s,\xi-\eta)\widehat{\mathcal{C}^{\varepsilon_2}c_2}(s,\eta)d\eta ds
\end{split}
\end{equation}
for some $m(\xi_1,\xi_2)$ as in \eqref{Symb2}, some $\varepsilon=(\varepsilon_1,\varepsilon_2)\in \{\pm\}^2$, $c_1(t,x),c_2(t,x)\in\{b(t,x),\chi^Q(x)\}$, where $\mathcal{C}^+=Id$ and $\mathcal{C}^-$ denotes the complex conjugation and
\begin{equation}\label{Phase}
\phi_\varepsilon(\xi_1,\xi_2)=-\langle\xi_1+\xi_2\rangle+\varepsilon_1\langle\xi_1\rangle+\varepsilon_2\langle\xi_2\rangle.
\end{equation}
We will obtain bounds which are uniform in $\varepsilon$ and $m$.

\medskip

In all that follows, the worst term to keep in mind is a variation of $e^{it\langle\nabla\rangle}\chi^Q\cdot\vert\nabla\vert P_{\ge 1}e^{it\langle\nabla\rangle}b(t)$ where $b$ has to provide both decay and regularity.

\medskip

We start with a simple estimate.
\begin{lem}\label{ControlDS}
For any choice of $c\in\{b,\chi^Q\}$, we have 
\begin{equation*}
\Vert e^{it\langle\nabla\rangle}\partial_tc(t)\Vert_{H^{\sigma+3}_x}\lesssim (1+t)^{-9/10-\varepsilon}\left[Q+\Vert \beta\Vert_{X_T}\right]^2
\end{equation*}
for $0<\varepsilon<1/100$ and
\begin{equation*}
\Vert e^{it\langle\nabla\rangle}\partial_tc(t)\Vert_{H^{\sigma-1}_x}\lesssim (1+t)^{-6/5}\left[Q+\Vert \beta\Vert_{X_T}\right]^2.
\end{equation*}
\end{lem}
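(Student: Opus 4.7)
Since $\chi^Q$ is independent of time, $\partial_t\chi^Q=0$ and the lemma is trivial when $c=\chi^Q$; it suffices to treat $c=b$, for which \eqref{dtB} gives $e^{it\langle\nabla\rangle}\partial_t b = I + II + III$ with $I$, $II$, $III$ the three pieces of the right-hand side of \eqref{EP3}. The plan is to estimate each separately in $H^{\sigma+3}$ and $H^{\sigma-1}$, invoking the forthcoming tame bilinear estimate (Lemma~\ref{LemBil}) together with the dispersive bounds \eqref{BdChi} and the definition of $\|\cdot\|_{X_T}$.

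The structural observation I will exploit is that each symbol $m_p$, $m_t$ in \eqref{Symb2} factors as $|\xi_i|$ times a benign Coifman--Meyer multiplier, so morally every term in $F(f,g)$ is a derivative pulled out of a product and one is free to place that derivative on whichever input decays faster. Term $I$ is then the easiest: placing the derivative on one copy of $e^{it\langle\nabla\rangle}\chi^Q$, the second line of \eqref{BdChi} gives $\|\nabla e^{it\langle\nabla\rangle}\chi^Q\|_{L^p}\lesssim Q(1+t)^{-3/2+\epsilon}$ for $p$ large, while the other copy is controlled in $H^N$ by $Q$, yielding a $Q^2$ bound decaying faster than $(1+t)^{-3/2+\epsilon}$. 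For the quadratic-in-$\beta$ term $III$, I would interpolate $\beta$ between $L^2$ (via the $H^N$ part of $\|\beta\|_{X_T}$) and $L^{10}$ (via the $B^\sigma_{10,2}$ part, which decays at rate $(1+t)^{-6/5}$) to get a bilinear bound of order $(1+t)^{-12/5+\epsilon}\|\beta\|_{X_T}^2$, better than required.

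The main obstacle is the mixed term $II$ in the worst configuration flagged above, $\tilde T_{\tilde m}[e^{it\langle\nabla\rangle}\chi^Q,|\nabla|\beta]$, where $\chi^Q$ sits in the non-derivative slot so the symbol's $|\nabla|$ falls on $\beta$. Here $\|e^{it\langle\nabla\rangle}\chi^Q\|_{L^p}$ decays only at rate $\frac{3}{2}(1-\frac{2}{p})$ and requires $p<3$, forcing a regularity/Lebesgue tradeoff on the $\beta$ factor. After a Littlewood--Paley decomposition I would place $\chi^Q$ in $L^{3-\epsilon}$ (decay $(1+t)^{-1/2+\epsilon}$) and $|\nabla|\beta$ in an intermediate Besov space obtained by interpolating $B^{\sigma-1}_{10,2}$ (decay $(1+t)^{-6/5}$) against $H^{N-1}$ (no decay). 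The derivative slack $N-\sigma\ge 7$ is exactly what lets such an interpolation land on the Hölder-conjugate exponent while still producing decay at least $(1+t)^{-9/10-\epsilon}$; for the lower-regularity estimate in $H^{\sigma-1}$, the four extra derivatives of slack on the output widen the interpolation window, pushing closer to $L^{10}$ on $\nabla\beta$ and gaining the extra decay needed to reach $(1+t)^{-6/5}$.
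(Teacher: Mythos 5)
Your skeleton is the same as the paper's (the case $c=\chi^Q$ is trivial; estimate the quadratic terms one by one; the enemy is the mixed term with $\chi^Q$ in the underived slot; the $H^{\sigma-1}$ bound via an $L^{10}\times L^{5/2}$-type H\"older, which is exactly the paper's first estimate and does give $(1+t)^{-3/2}$). But the $H^{\sigma+3}$ bound for the worst term does not close the way you set it up. Putting $e^{it\langle\nabla\rangle}\chi^Q$ in $L^{3-\epsilon}$ forces $\vert\nabla\vert\beta$ into $L^{q}$ with $q\approx 6$, and that factor must carry essentially all of the $\sigma+3$ output derivatives (the $\chi^Q$ factor lives at frequencies $\lesssim 1$, so it contributes no regularity). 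Interpolating $B^{\sigma}_{10,2}$ (which decays but has only $\sigma$ derivatives) against $H^{N}$ (no decay) down to exponent $6$ puts weight $\theta=1/6$ on $H^N$, hence yields only $\sigma+(N-\sigma)/6=\sigma+7/6$ derivatives when $N=\sigma+7$; your split would need $N\ge\sigma+24$. So the claim that the slack $N-\sigma\ge 7$ is ``exactly'' enough is not supported by the arithmetic -- it fails by almost three derivatives. The restriction $p<3$ you impose (read off from the first line of \eqref{BdChi}) is what locks you into this bad exponent pair: any H\"older split that leaves enough derivatives on the $\beta$ factor pushes its Lebesgue exponent toward $3$, hence pushes $\chi^Q$ to $L^p$ with $p>3$, where \eqref{BdChi} as stated gives nothing. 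The missing ingredient, which the paper supplies, is the bound $\Vert\langle\nabla\rangle e^{it\langle\nabla\rangle}\chi^Q\Vert_{L^\infty_x}\lesssim\sum_N\min(N^{1/2},N^{-1}(1+t)^{-3/2})\lesssim(1+t)^{-1/2}$: the underived forcing still decays at the capped rate $1/2$ in $L^\infty$. With it the paper proves a second estimate for the same bilinear term, in $H^{\sigma+6}$ with decay only $(1+t)^{-1/2}$, placing all the regularity on $c_1\in H^{\sigma+7}\subset H^N$, and then interpolates the two bounds ($H^{\sigma-1}$ with rate $3/2$, $H^{\sigma+6}$ with rate $1/2$) to land on $H^{\sigma+3}$ with rate $9/10+\varepsilon$. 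Either adopt that two-estimate interpolation, or rerun your single H\"older with $\chi^Q\in L^p$, $p$ large, at rate $1/2$ and the $\beta$ factor near $L^3$ -- but both repairs require precisely the $L^\infty$-type decay your proposal does not invoke.

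Two smaller points. The ``structural observation'' that one may place the derivative on whichever input decays faster is false: the symbols in \eqref{Symb2} are $\vert\xi_1\vert$ times a zeroth-order multiplier, so the derivative is attached to a fixed slot -- which is exactly why the worst configuration you then (correctly) single out exists at all. For the $\beta$--$\beta$ term, a rate $(1+t)^{-12/5}$ is not attainable in an $L^2$-based norm from the $X_T$ information alone: for any H\"older split $1/p+1/q=1/2$ with decay interpolated between $L^2$ and $L^{10}$, the total rate is $3(1-\frac1p-\frac1q)=\frac32$; this still beats the required $6/5$, so the conclusion stands, but the bookkeeping should be fixed (the paper simply uses \eqref{ProdForm} with $\Vert\beta\Vert_{W^{2,10}_x}\Vert\beta\Vert_{H^{\sigma+5}_x}$). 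Finally, no division by the phase occurs in this lemma, so the relevant multipliers are Coifman--Meyer and plain H\"older applies; Lemma \ref{LemBil} (which concerns $m/\phi_\varepsilon$ and loses powers of the low frequency) is not the right tool here.
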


\begin{proof}[Proof of Lemma \ref{ControlDS}]
The case $c=\chi^Q$ is trivial. Hence it suffices to treat the case $c=b$. From \eqref{Symb2}, \eqref{dtB}, it suffices to show that
\begin{equation*}
\begin{split}
\Vert \tilde{T}_{\mu}[\vert\nabla\vert e^{it\langle\nabla\rangle}c_1,\langle\nabla\rangle e^{it\langle\nabla\rangle}c_2]\Vert_{H^{\sigma+3}_x}
&\lesssim (1+t)^{-9/10-\varepsilon}(Q+\Vert\beta\Vert_{X_T})^2,\\
\Vert \tilde{T}_{\mu}[\vert\nabla\vert e^{it\langle\nabla\rangle}c_1,\langle\nabla\rangle e^{it\langle\nabla\rangle}c_2]\Vert_{H^{\sigma-1}_x}
&\lesssim (1+t)^{-6/5}(Q+\Vert\beta\Vert_{X_T})^2
\end{split}
\end{equation*}
for $c_1,c_2\in\{Rb,R\chi^Q\}$, where $R$ denotes a Fourier multiplier coming from a symbol of order $0$, (which we will omit for simplicity of notation) and for 
$\mu\in\{1,\frac{\< \xi_1 + \xi_2 \>}{\< \xi_2 \> \< \xi_1 \>} \}$, for which it is easily seen (for instance by Coifman Meyer theory) that H\"older bounds apply.
We first treat the worst case using \eqref{BdChi} and \eqref{Norm}.
\begin{equation*}
\begin{split}
\Vert \tilde{T}_\mu[\vert\nabla\vert e^{it\langle\nabla\rangle}c_1,\langle\nabla\rangle e^{it\langle\nabla\rangle}\chi^Q]\Vert_{H^{\sigma-1}_x}&\lesssim \Vert \vert\nabla\vert e^{it\langle\nabla\rangle}c_1\Vert_{W^{\sigma-1,10}_x}\Vert\langle\nabla\rangle e^{it\langle\nabla\rangle}\chi^Q\Vert_{W^{\sigma-1,5/2}_x}\\
&\lesssim (1+t)^{-3/2}(Q+\Vert\beta\Vert_{X_T})^2.
\end{split}
\end{equation*}
Independently,
\begin{equation*}
\begin{split}
\Vert \tilde{T}_\mu[\vert\nabla\vert e^{it\langle\nabla\rangle}c_1,\langle\nabla\rangle e^{it\langle\nabla\rangle}\chi^Q]\Vert_{H^{\sigma+6}_x}&\lesssim \Vert \langle\nabla\rangle e^{it\langle\nabla\rangle}\chi^Q\Vert_{L^\infty_x}\Vert c_1\Vert_{H^{\sigma+7}_x}\\
&\lesssim \left[\sum_N\min(N^\frac{1}{2},N^{-1}(1+t)^{-3/2})\right](Q+\Vert \beta\Vert_{X_T})^2\\
&\lesssim (1+t)^{-1/2}(Q+\Vert \beta\Vert_{X_T})^2.
\end{split}
\end{equation*}
Interpolating at order $(4/10+\varepsilon,6/10-\varepsilon)$, we see that this term is acceptable.

It only remains to consider, using \eqref{ProdForm}
\begin{equation*}
\begin{split}
\Vert \tilde{T}_\mu[\vert\nabla\vert \beta,\langle\nabla\rangle\beta]\Vert_{H^{\sigma+4}_x}&\lesssim \Vert\beta\Vert_{W^{2,10}_x}\Vert\beta\Vert_{H^{\sigma+5}_x}\lesssim (1+t)^{-6/5}\Vert \beta\Vert_{X_T}^2.
\end{split}
\end{equation*}
This ends the proof.
\end{proof}

\begin{cor}
\label{croco}
Under the assumptions of the theorem, the solution scatters in $H^{\sigma}$; namely there exists $\alpha_\infty$ in $H^{\sigma}_x$ such that
\begin{equation*}
\left\| \alpha(t) - e^{it \<\nabla\>} \alpha_\infty \right\|_{H^{\sigma}_x} \longrightarrow 0,\quad \mbox{as $t \rightarrow +\infty$} .
\end{equation*}
\end{cor}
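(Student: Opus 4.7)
\textbf{Proof sketch for Corollary \ref{croco}.} The plan is to produce the scattering profile as $\alpha_\infty = \chi^Q + b_\infty$, where $b_\infty = \lim_{t\to\infty} b(t)$ in $H^\sigma$. Recalling that $\beta(t) = e^{it\<\nabla\>}b(t)$ and $\alpha(t) = \beta(t) + e^{it\<\nabla\>}\chi^Q$, the identity
\begin{equation*}
\alpha(t) - e^{it\<\nabla\>}\alpha_\infty = e^{it\<\nabla\>}\bigl(b(t) - b_\infty\bigr)
\end{equation*}
together with the unitarity of $e^{it\<\nabla\>}$ on $H^\sigma$ reduces the scattering statement to showing that $b(t)$ admits a limit in $H^\sigma$ as $t\to+\infty$.

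By the Duhamel identity \eqref{dtB}, $b(t)$ is a Cauchy sequence in $H^\sigma$ provided $\partial_t b$ is integrable in $t$ with values in $H^\sigma$. To control this, I would invoke Lemma \ref{ControlDS} applied to $c=b$: taking advantage of the unitarity of $e^{it\<\nabla\>}$ on any $H^s$, that lemma yields
\begin{equation*}
\Vert \partial_t b(t)\Vert_{H^{\sigma-1}_x}\lesssim (1+t)^{-6/5}(Q+\Vert\beta\Vert_{X_T})^2,\qquad \Vert \partial_t b(t)\Vert_{H^{\sigma+3}_x}\lesssim (1+t)^{-9/10-\varepsilon}(Q+\Vert\beta\Vert_{X_T})^2.
\end{equation*}
Since Proposition \ref{MainProp} (together with a standard continuity-bootstrap argument using the assumption of the theorem) guarantees $\Vert\beta\Vert_{X_T}$ is bounded uniformly in $T$, interpolation between these two Sobolev estimates with weights $3/4$ and $1/4$ gives
\begin{equation*}
\Vert \partial_t b(t)\Vert_{H^\sigma_x}\lesssim (1+t)^{-\frac{3}{4}\cdot\frac{6}{5}-\frac{1}{4}\cdot(\frac{9}{10}+\varepsilon)}=(1+t)^{-\frac{9}{8}-\frac{\varepsilon}{4}},
\end{equation*}
which is integrable on $[0,\infty)$.

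Hence $\int_0^{\infty}\Vert \partial_t b(s)\Vert_{H^\sigma_x}\,ds<\infty$, so $(b(t))_{t\ge0}$ is Cauchy in $H^\sigma$ and converges to some limit $b_\infty\in H^\sigma$ with $\Vert b(t)-b_\infty\Vert_{H^\sigma_x}\to 0$. The definition $\alpha_\infty:=\chi^Q+b_\infty$ then yields the claimed scattering. The only real obstacle to keep in mind is that the $H^{\sigma+3}$ bound in Lemma \ref{ControlDS} alone is not integrable (exponent $9/10+\varepsilon<1$); what makes the argument work is that the loss-of-derivative decay $t^{-6/5}$ at regularity $\sigma-1$ is strong enough to compensate via interpolation, producing an exponent strictly greater than $1$ at the desired regularity $\sigma$.
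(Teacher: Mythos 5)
Your proof is correct and follows the same route as the paper: reduce to integrability of $\|\partial_t b(t)\|_{H^\sigma}$ and obtain it by interpolating the two bounds of Lemma~\ref{ControlDS} (the paper quotes the resulting decay exponent as $19/14$ rather than your $9/8+\varepsilon/4$, but both exceed $1$ so the conclusion is unchanged, and given the stated lemma your arithmetic is the correct one). One cosmetic note: the identity \eqref{dtB} is the profile ODE rather than a ``Duhamel identity,'' and what you actually use is $b(T)-b(S)=\int_S^T\partial_t b\,dt$ together with the integrable bound.
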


\begin{proof}
This follows immediately from the integrability in time of
\begin{equation*}
\Vert \partial_t b(t)\Vert_{H^\sigma_x}\lesssim (1+t)^{-19/14}
\end{equation*}
which in turn follows by interpolation from the two bounds in Lemma \ref{ControlDS}.
\end{proof}

Now, we are ready to prove Proposition \ref{MainProp}.

\begin{proof}[Proof of Proposition \ref{MainProp}]

We start with the dispersive estimate. We claim that under the hypothesis of Proposition \ref{MainProp}, there holds that
\begin{equation}\label{disp}
\sup_{0\le t\le T}(1+t)^\frac{6}{5}\Vert \beta(t)\Vert_{W^{\sigma,10}_x}\lesssim \Vert\beta(0)\Vert_Y+(Q+\Vert\beta\Vert_{X_T})^2.
\end{equation}
Using \eqref{LinEst}, and \eqref{beta}, it suffices to treat the nonlinear term $\mathcal{N}$.
The effect of the nonlinear terms $\mathcal{I}_{\varepsilon,m}$ can be conveniently reformulated through a normal form transformation, which follows from a simple integration 
by parts. Indeed,
\begin{equation}\label{NF}
\begin{split}
\mathcal{I}_{\varepsilon,m}[c_1,c_2](t)
&=-ie^{-it\langle\nabla\rangle}T_{m/\phi_\varepsilon}[\mathcal{C}^\varepsilon_1e^{it\langle\nabla\rangle}c_1(t),\mathcal{C}^\varepsilon_2e^{it\langle\nabla\rangle}c_2(t)]
+iT_{m/\phi_\varepsilon}[c_1(0),c_2(0)]\\
&+i \mathcal{I}_{\varepsilon,m/\phi_\varepsilon}[\partial_sc_1,c_2](t)+ i \mathcal{I}_{\varepsilon,m/\phi_\varepsilon}[c_1,\partial_sc_2](t).
\end{split}
\end{equation}

\medskip

The first term in \eqref{NF} can be treated as follows.
First, from \eqref{LH}, Bernstein and Sobolev inequalities, we observe that
\begin{equation*}
\begin{split}
&\sum_{M\ge 1}\Vert P_{\ge M/8}\tilde{T}_{m/\phi_\varepsilon}[P_{M}e^{it\langle\nabla\rangle}c_1(t),P_{\ge M/8}e^{it\langle\nabla\rangle}c_2(t)]\Vert_{B^{\sigma}_{10,2}}\\
&\lesssim \sum_{ M\ge 1}\Vert e^{it\langle\nabla\rangle}P_{M}c_1(t)\Vert_{W^{1,\infty}_x}\Vert e^{it\langle\nabla\rangle}c_2(t)\Vert_{B^{\sigma+5}_{10,2}}\\
&\lesssim (1+t)^{-6/5}(Q+\Vert\beta\Vert_{X_T}) \sum_{M\ge 1}M^\frac{3}{10}M^{1-\sigma}\Vert c_2\Vert_{H^{\sigma+31/5}_x}\\
&\lesssim (1+t)^{-6/5}(Q+\Vert\beta\Vert_{X_T})^2
\end{split}
\end{equation*}
Independently, using \eqref{LH} with $p=\infty$, we compute that
\begin{equation*}
\begin{split}
&\sum_{M\le 8}\Vert P_{\ge M/8}\tilde{T}_{m/\phi_\varepsilon}[P_Me^{it\langle\nabla\rangle}c_1(t),P_{\ge M/8}e^{it\langle\nabla\rangle}c_2(t)]\Vert_{B^{\sigma}_{10,2}}\\
&\lesssim \sum_{M\le 8}\Vert P_Me^{it\langle\nabla\rangle}c_1(t)\Vert_{L^\infty_x}\Vert e^{it\langle\nabla\rangle}\vert \nabla\vert c_2(t)\Vert_{B^{\sigma}_{10,2}}\\
&\lesssim \left[\sum_{M\le 8}\min(M^{-1/2}(1+t)^{-1},M^{1/2})\right](Q+\Vert\beta\Vert_{X_T})\Vert e^{it\langle\nabla\rangle}\vert \nabla\vert c_2(t)\Vert_{B^{\sigma}_{10,2}}\\
&\lesssim (1+t)^{-1/2}(Q+\Vert\beta\Vert_{X_T})\Vert e^{it\langle\nabla\rangle}\vert\nabla\vert c_2(t)\Vert_{B^{\sigma}_{10,2}}.
\end{split}
\end{equation*}
If $c_2=\chi^Q$, this is acceptable; otherwise, using complex interpolation,
\begin{equation*}
\begin{split}
\Vert e^{it\langle\nabla\rangle}b(t)\Vert_{B^{\sigma+1}_{10,2}}&\lesssim \Vert e^{it\langle\nabla\rangle}b(t)\Vert_{B^{\sigma}_{10,2}}^\frac{7}{12}\Vert e^{it\langle\nabla\rangle}b(t)\Vert_{B^{\sigma+12/5}_{10,2}}^\frac{5}{12}\\
&\lesssim (1+t)^{-7/10}(Q+\Vert\beta\Vert_{X_T})^\frac{7}{12}\Vert b(t)\Vert_{H^{\sigma+18/5}}^\frac{5}{12}\lesssim (1+t)^{-7/10}(Q+\Vert\beta\Vert_{X_T}),
\end{split}
\end{equation*}
and we see that this term is acceptable. Finally, using \eqref{HoldMNO},
\begin{equation*}
\begin{split}
&\sum_{M}\Vert P_{\le M/8}\tilde{T}_{m/\phi_\varepsilon}[P_Me^{it\langle\nabla\rangle}c_1(t),P_{\ge M/8}e^{it\langle\nabla\rangle}c_2(t)]\Vert_{B^{\sigma}_{10,2}}\\
&\lesssim \sum_{N\le M\sim O} N^\frac{3}{5}\Vert P_N\tilde{T}_{m/\phi_\varepsilon}[P_Me^{it\langle\nabla\rangle}c_1(t),P_Oe^{it\langle\nabla\rangle}c_2(t)]\Vert_{W^{\sigma,5}_x}\\
&\lesssim \sum_{N\le M\sim O}N^{\frac{3}{5}}\langle N\rangle^{\sigma+5} \sum_{\{a,b\}=\{1,2\}}\Vert P_Me^{it\langle\nabla\rangle}c_a(t)\Vert_{L^{10}_x}\Vert P_Oe^{it\langle\nabla\rangle}\vert\nabla\vert c_b(t)\Vert_{L^{10}_x}\\
&\lesssim (1+t)^{-6/5}(Q+\Vert\beta\Vert_{X_T})^2
\end{split}
\end{equation*}

\medskip

For the second term, using \eqref{crane}, we see that, for $\kappa=1/1000$,
\begin{equation*}
\begin{split}
\Vert e^{it\langle\nabla\rangle}\tilde{T}_{m/\phi,\varepsilon}[c_1(0),c_2(0)]\Vert_{B^{\sigma}_{10,2}}
&\lesssim (1+t)^{-6/5}\Vert \tilde{T}_{m/\phi,\varepsilon}[c_1(0),c_2(0)]\Vert_{W^{\sigma+2,10/9}_x}\\
&\lesssim (1+t)^{-6/5}\Vert \vert\nabla\vert^{-\kappa}c_1(0)\Vert_{W^{\sigma+5+2\kappa,20/9}}\Vert \vert\nabla\vert^{-\kappa}c_2(0)\Vert_{W^{\sigma+5+2\kappa,20/9}}\\
&\lesssim (1+t)^{-6/5}(Q+\Vert b\Vert_X)^2.
\end{split}
\end{equation*}

\medskip

We treat the third term in \eqref{NF} in a similar way, using \eqref{crane}, Lemma \ref{LemBil} and Lemma \ref{ControlDS}. First, we have for the worst term
\begin{equation*}
\begin{split}
&\sum_{M}\Vert P_{\ge M/8}e^{it\langle\nabla\rangle}\mathcal{I}_{\varepsilon,m/\phi_\varepsilon}[P_{\ge M/8}\partial_sc_1,P_Mc_2]\Vert_{B^{\sigma}_{10,2}}\\
&\lesssim \sum_{M}\int_0^t(1+t-s)^{-6/5}\Vert P_{\ge M/8}\tilde{T}_{m/\phi_\varepsilon}[P_{\ge M/8}e^{is\langle\nabla\rangle}\partial_sc_1,P_Me^{is\langle\nabla\rangle}c_2]\Vert_{B^{\sigma+2}_{10/9,2}}ds\\
&\lesssim \sum_{M}\int_0^t(1+t-s)^{-6/5}\Vert e^{is\langle\nabla\rangle}\partial_sc_1\Vert_{H^{\sigma+3}_x}\Vert P_Me^{is\langle\nabla\rangle}c_2\Vert_{W^{5,\frac{5}{2}}_x}ds\\
&\lesssim \int_0^t(1+t-s)^{-6/5}(1+s)^{-9/10-\varepsilon}\sum_{M}\min(M^\frac{3}{10}(1+M)^{5-N},(1+s)^{-3/10})ds(Q+\Vert\beta\Vert_{X_T})^2\\
&\lesssim (1+t)^{-6/5}(Q+\Vert\beta\Vert_{X_T})^2
\end{split}
\end{equation*}
while the other terms are easier; we now turn to them. Let $\varepsilon$ be as in Lemma \ref{ControlDS} and $2_\varepsilon$ and $(5/2)_\varepsilon$ be such that
\begin{equation*}
\frac{1}{2_\varepsilon}=\frac{1}{2}-\frac{\varepsilon}{3},\quad\frac{1}{(5/2)_\varepsilon}=\frac{2}{5}+\frac{\varepsilon}{3},
\end{equation*}
then
\begin{equation*}
\begin{split}
&\sum_{M}\Vert P_{\ge M/8}e^{it\langle\nabla\rangle}\mathcal{I}_{\varepsilon,m/\phi_\varepsilon}[P_{M}\partial_sc_1,P_{\ge M/8}c_2]\Vert_{B^{\sigma}_{10,2}}\\
&\lesssim \sum_{M}\int_0^t(1+t-s)^{-6/5}\Vert P_{\ge M/8}\tilde{T}_{m/\phi_\varepsilon}[P_{M}e^{is\langle\nabla\rangle}\partial_sc_1,P_{\ge M/8}e^{is\langle\nabla\rangle}c_2]\Vert_{B^{\sigma+2}_{10/9,2}}ds\\
&\lesssim \sum_{M}\int_0^t(1+t-s)^{-6/5}\Vert P_Me^{is\langle\nabla\rangle}\partial_sc_1\Vert_{W^{4,2_\varepsilon}_x}\Vert \vert\nabla\vert e^{is\langle\nabla\rangle}c_2\Vert_{W^{\sigma+3,(5/2)_\varepsilon}_x}ds\\
&\lesssim \int_0^t(1+t-s)^{-6/5}\sum_{M}M^\varepsilon \Vert P_M\partial_sc_1\Vert_{H^5_x}(1+s)^{-3/10+\varepsilon}ds(Q+\Vert\beta\Vert_{X_T})\\
&\lesssim (1+t)^{-6/5}(Q+\Vert\beta\Vert_{X_T})^2
\end{split}
\end{equation*}
and finally,
\begin{equation*}
\begin{split}
&\sum_{M\ge 1}\Vert P_{M}e^{it\langle\nabla\rangle}\mathcal{I}_{\varepsilon,m/\phi_\varepsilon}[P_{\ge 4M}\partial_sc_1,P_{\ge 4M}c_2]\Vert_{B^{\sigma}_{10,2}}\\
&\lesssim \sum_{M\ge 1}\int_0^t(1+t-s)^{-6/5}\Vert P_{M}\tilde{T}_{m/\phi_\varepsilon}[P_{\ge 4M}e^{is\langle\nabla\rangle}\partial_sc_1,P_{\ge 4M}e^{is\langle\nabla\rangle}c_2]\Vert_{B^{\sigma+2}_{10/9,2}}ds\\
&\lesssim \sum_{1\le M\le N\sim O}M^{\sigma+7}N\int_0^t(1+t-s)^{-6/5}\Vert P_N\partial_sc_1\Vert_{L^{2}_x}\Vert P_Oe^{is\langle\nabla\rangle}c_2\Vert_{L^{\frac{5}{2}}_x}ds\\
&\lesssim (1+t)^{-6/5}(Q+\Vert\beta\Vert_{X_T})^2
\end{split}
\end{equation*}
while
\begin{equation*}
\begin{split}
&\sum_{M\le 1}\Vert P_{M}e^{it\langle\nabla\rangle}\mathcal{I}_{\varepsilon,m/\phi_\varepsilon}[P_{\ge 4M}\partial_sc_1,P_{\ge 4M}c_2]\Vert_{B^{\sigma}_{10,2}}\\
&\lesssim \sum_{M\le 1}\int_0^t(1+t-s)^{-6/5}\Vert P_{M}\tilde{T}_{m/\phi_\varepsilon}[P_{\ge 4M}e^{is\langle\nabla\rangle}\partial_sc_1,P_{\ge 4M}e^{is\langle\nabla\rangle}c_2]\Vert_{L^{10/9}_x}ds\\
&\lesssim \sum_{M\le 1}M^{3/10}\int_0^t(1+t-s)^{-6/5}\left[\sum_N\Vert P_N\partial_sc_1\Vert_{L^{2}_x}\Vert P_Nc_2\Vert_{L^{2}_x}\right]ds\\
&\lesssim (1+t)^{-6/5}(Q+\Vert\beta\Vert_{X_T})^2.
\end{split}
\end{equation*}

Now, to finish the proof, we appeal to the following result, which follows from a straightforward adaptation of the computations in \cite{IonPau}.
There exists an energy
\begin{equation*}
E_N:=\sum_{\vert\gamma\vert\le N}\int_{\mathbb{R}^3}\left\{\vert\partial^\gamma(\rho-1)\vert^2+\rho\vert\partial^\gamma v\vert^2+\vert \vert\nabla\vert^{-1}\partial^\alpha(\rho-1)\vert^2\right\}dx
\end{equation*}
such that, if
\begin{equation*}
\sup_{0\le t\le T}\Vert \alpha(t)\Vert_{H^5}\lesssim 1
\end{equation*}
then
\begin{equation*}
E_N(t)\simeq \Vert \alpha(t)\Vert_{H^N}^2
\end{equation*}
and for any $0\le t\le T$,
\begin{equation*}
E_N(t)\lesssim E_N(0)+\int_0^t\Vert\alpha(s)\Vert_{Z^\prime}E_N(s)ds,\quad\Vert f\Vert_{Z^\prime}\lesssim\sup_{M}(M^\frac{3}{4}+M^\frac{4}{3})\Vert P_M\alpha\Vert_{L^\infty}. 
\end{equation*}
Using that
\begin{equation*}
\begin{split}
\Vert P_M\alpha\Vert_{L^\infty_x}&\lesssim \Vert P_Me^{it\langle\nabla\rangle}\chi^Q\Vert_{L^\infty_x}+\Vert P_M\beta\Vert_{L^\infty_x}\\
&\lesssim\min(M^2,1,M^{-1}\langle M\rangle^{\frac{5}{2}} t^{-\frac{3}{2}})Q+\min(M^{-N+3},1,M^\frac{3}{10}\langle M\rangle^{-\sigma}t^{-\frac{6}{5}})\Vert\beta\Vert_X
\end{split}
\end{equation*}
is integrable, we obtain an a priori bound on $E_N$. This ends the proof.

\end{proof}

\section{Appendix: analytical tools}

\subsection{Linear decay}

\label{linear}

The standard dispersive estimates for the linear Klein-Gordon equation follow from a straightforward application of the stationary phase estimates
\begin{equation}
\label{crane}
\left\| e^{it\<\nabla\>} f \right\|_{B^0_{p,2}} \lesssim t^{\frac{3}{p}-\frac{3}{2}} 
\|f\|_{B^{ 5 \left( \frac{1}{2} - \frac{1}{p} \right)}_{ p',2}}\lesssim t^{\frac{3}{p}-\frac{3}{2}} \Vert f\Vert_{W^{5(\frac{1}{2}-\frac{1}{p}),p^\prime}_x}
\;\;\;\;\;\;\mbox{if $2\leq p<\infty$.}
\end{equation}

We also use the simple product formula, valid for all $\gamma\in\mathbb{N}$ and smooth functions $a$, $b$
\begin{equation}\label{ProdForm}
\Vert ab\Vert_{H^\gamma_x}\lesssim_\gamma \Vert a\Vert_{H^\gamma_x}\Vert b\Vert_{W^{1,10}_x}+\Vert a\Vert_{W^{1,10}_x}\Vert b\Vert_{H^\gamma_x}.
\end{equation}

\subsection{Boundedness of pseudo-products}

\label{bilinear}

We need the following Lemma about boundedness of some pseudo-products as defined in \eqref{Pseu}. The main feature here is that the extra loss of derivative coming from the fact that the phase $\phi_\varepsilon$ can be small only gives extra powers of the low frequency $L$.

\begin{lem}\label{LemBil}
Let $m$ be a symbol as in \eqref{Symb2} and $\varepsilon\in\{(\pm,\pm)\}$. Then for any $1\le r\le \infty$ and $1<p,q\le+\infty$ with
\begin{equation*}\label{Holder}
\frac{1}{r}=\frac{1}{p}+\frac{1}{q},
\end{equation*}
we have H\"older's inequality
\begin{equation}\label{HoldMNO}
\Vert P_{M}\tilde{T}_{m/\phi_\varepsilon}[P_Na,P_Ob]\Vert_{L^r_x}\lesssim H(1+L)^5\Vert a\Vert_{L^p_x}\Vert b\Vert_{L^q_x},
\end{equation}
where $H=\max(M,N,O)$ and $L=\min(M,N,O)$.
In particular, we find that for $\kappa>0$,
\begin{equation*}
\Vert T_{m/\phi_\varepsilon}[a,b]\Vert_{W^{\sigma,r}_x}
\lesssim_\kappa \left[\Vert a\Vert_{W^{\sigma+3+\kappa,p}_x}+\Vert \vert\nabla\vert^{-\kappa}a\Vert_{L^p_x}\right]\left[\Vert b\Vert_{W^{\sigma+3+\kappa,q}_x}+\Vert\vert\nabla\vert^{-\kappa}b\Vert_{L^q_x}\right].
\end{equation*}
Furthermore, for $\sigma \geq 0$, $2 \leq p \leq \infty$, $2<q<\infty$, $1<r<\infty$, we get 
\begin{equation}\label{LH}
\Vert P_{\ge M/8}T_{m/\phi_\varepsilon}[P_Ma,P_{\ge M/8}b]\Vert_{B^{\sigma}_{r,2}}\lesssim \Vert a\Vert_{W^{\gamma-\theta,p}_x}\Vert \vert\nabla\vert b\Vert_{B^{\sigma+\theta}_{q,2}}.
\end{equation}
where  $0\le \theta\le\gamma$, $\gamma=5$ if $M\ge 1$ and $\gamma=0$ if $M\le 1$.
\end{lem}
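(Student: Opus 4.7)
The plan is to prove the three assertions of Lemma~\ref{LemBil} by combining a Coifman--Meyer-type Fourier series expansion on dyadic frequency blocks with a careful lower bound on the Klein--Gordon phase $\phi_\varepsilon$. The central object is, for dyadic parameters $M,N,O$, the localized symbol
\begin{equation*}
\mu_\varepsilon(\xi_1,\xi_2) := \frac{m(\xi_1,\xi_2)}{\phi_\varepsilon(\xi_1,\xi_2)} \chi_N(\xi_1)\chi_O(\xi_2)\chi_M(\xi_1+\xi_2).
\end{equation*}
After rescaling the variables so that the support lies in a unit box, I would expand $\mu_\varepsilon$ in a Fourier series, write
\begin{equation*}
T_{\mu_\varepsilon}[P_N a,P_O b](x) = \sum_{k\in\mathbb Z^6} c_k (\widetilde{P_N}a)(x+x_k)(\widetilde{P_O}b)(x+y_k),
\end{equation*}
and apply H\"older to obtain the bound $\|T_{\mu_\varepsilon}[P_Na,P_Ob]\|_{L^r} \lesssim (\sum_k |c_k|) \|a\|_{L^p}\|b\|_{L^q}$. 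Standard integration-by-parts bounds give $\sum_k|c_k|\lesssim \|\mu_\varepsilon\|_{C^{s}}$ after the rescaling, for some fixed~$s$.

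The key step, and main obstacle, is the symbol estimate, which reduces to two sub-steps. First, from the explicit form of $m$ in~\eqref{Symb2}, one reads off $|\partial^\alpha m|\lesssim H\cdot H^{-|\alpha_1|}\cdot H^{-|\alpha_2|}$ (the factor $H$ coming from the first-order nature of $m$, with any singular $|\xi|^{-1}$ behaviour absorbed by the dyadic support). Second, one must show that on the dyadic region
\begin{equation*}
|\phi_\varepsilon(\xi_1,\xi_2)|\gtrsim (1+L)^{-c},\qquad |\partial^\alpha(1/\phi_\varepsilon)|\lesssim (1+L)^{-c}\cdot H^{-|\alpha|}
\end{equation*}
for some small absolute constant~$c$. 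For the sign choices $\varepsilon=(\pm,\pm)$ the phase is uniformly bounded below by a constant; for the mixed signs, writing out $\phi_\varepsilon$ using $\langle\xi\rangle=\sqrt{1+|\xi|^2}$ and distinguishing the regimes where one of the three frequencies is much smaller than the others shows that resonances can only occur when $L$ is bounded, and the sharpest lower bound degrades at most like a power of $(1+L)^{-1}$. Combining with the bound on $m$, one gets $\sum_k |c_k|\lesssim H(1+L)^5$, which is~\eqref{HoldMNO}.

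The remaining assertions are then bookkeeping. For the $W^{\sigma,r}$ estimate, I would sum \eqref{HoldMNO} over $M,N,O$, using the extra $\sigma+3+\kappa$ derivatives to absorb the $H\cdot(1+L)^5$ factor at high frequencies and the $|\nabla|^{-\kappa}$ factor to ensure summability over the dyadic regime $L\le 1$ (where $(1+L)^5$ is a harmless constant but $\sum_{N\le 1} N^{0}$ would diverge). For the low-high estimate~\eqref{LH}, I would apply~\eqref{HoldMNO} with $N=M$ and $O\gtrsim M$; since in this regime $L\le M$, the factor $(1+L)^5$ is controlled by $(1+M)^5$, which corresponds exactly to the $\gamma=5$ (resp.\ $\gamma=0$) loss on the low-frequency input $a$ in the two cases $M\ge 1$ and $M\le 1$. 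Square summation in~$O$ of the Littlewood--Paley pieces then yields the Besov norm on the right-hand side, and a similar outer Littlewood--Paley summation produces $B^\sigma_{r,2}$ on the left.
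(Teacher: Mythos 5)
Your outline breaks down at the single step that carries all the difficulty: the claimed symbol bounds for $1/\phi_\varepsilon$. You assert that on the dyadic block $|\phi_\varepsilon|\gtrsim (1+L)^{-c}$ and, crucially, that $|\partial^\alpha(1/\phi_\varepsilon)|\lesssim (1+L)^{c}H^{-|\alpha|}$ with a loss independent of the order of differentiation, so that after rescaling the block to a unit box the $C^s$ norm is $\lesssim H(1+L)^{c}$. Both parts are wrong. First, the equal-sign phases are \emph{not} bounded below by a constant: for $\xi_1=\xi_2=Re_1$ one has $\phi_{++}\sim R^{-1}$, so the degeneration as $L\to\infty$ is present there too, and in general the correct lower bound is the anisotropic one used in the paper, $|\phi_\varepsilon(\xi-\eta,\eta)|\gtrsim \langle L\rangle\left[\theta^2+\langle L\rangle^{-2}\right]$ with $\theta$ the angle between the two input frequencies. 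Second, because $\phi_\varepsilon$ is of size $\langle L\rangle^{-1}$ only on an angular neighborhood of width $\sim\langle L\rangle^{-1}$, each derivative of $1/\phi_\varepsilon$ costs an extra factor $\left[\theta+L^{-1}\right]^{-1}$, i.e.\ up to a full factor of $L$ beyond Mikhlin scaling (this is estimate \eqref{BoundM} in the paper). So the rescaled symbol does not have $C^s$ norms of size $H(1+L)^{c}$: with correct bounds a Fourier-series/Coifman--Meyer expansion loses roughly a factor $L$ \emph{per derivative}, and since summability of the coefficients in six variables forces more than three derivatives per variable, this route gives a loss well above $(1+L)^5$ and does not prove \eqref{HoldMNO} as stated (the paper's footnote acknowledges such cruder bounds exist, but they would require more regularity on the data). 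A smaller inaccuracy in the same step: derivatives of $m$ scale with each variable's own dyadic size ($N^{-|\alpha_1|}O^{-|\alpha_2|}$), not with $H^{-|\alpha|}$; e.g.\ the angular factor $\xi_1\cdot\xi_2/(|\xi_1||\xi_2|)$ has $\xi_2$-derivatives of size $O^{-1}$, which can be much larger than $H^{-1}$.

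The paper avoids exactly this obstruction by not expanding the symbol at all: it dualizes, writes the trilinear form against a kernel $K_{M,N,O}$, decomposes physical space dyadically in $|y_1-y_2|\sim A$, $|y_2-y_3|\sim B$, and controls $A^aB^b|K_{M,N,O}|$ by integrating by parts using the angle-dependent bounds \eqref{BoundM}; the angular integration of $\left[\theta+L^{-1}\right]^{-(\cdot)}$ is what produces the factor $L^{a+b-1}$ and, after choosing $a,b\in\{2,4\}$ adaptively, the precise $(1+L)^5$ loss, with the frequency-side estimate closed by the maximal function and H\"older. If you want to salvage your approach you must first prove the anisotropic lower bound on $\phi_\varepsilon$ and the angle-degenerate derivative estimates, and then either run the paper's kernel argument or adapt the expansion to the angular scale $\langle L\rangle^{-1}$ (a second dyadic decomposition in $\theta$); the flat statement ``resonances only occur when $L$ is bounded, with a fixed power loss'' skips the actual content of the lemma. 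Your reductions of the two ``in particular'' statements from \eqref{HoldMNO} are essentially fine, though for \eqref{LH} you should note that the factor $H$ in \eqref{HoldMNO} is what becomes $|\nabla|$ on $b$, and that the stated flexibility $0\le\theta\le\gamma$ comes from shifting powers of $M$ between the two inputs, which are both localized at frequencies $\gtrsim M/8$.
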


\begin{proof}

Multiplying by a test function, we define
\begin{equation*}\label{Ker}
\begin{split}
I_{M,N,O}=\langle P_Mc,T_{m/\phi_\varepsilon}[P_Na,P_Ob]\rangle&=\iiint_{\mathbb{R}^3}K_{M,N,O}(y_1,y_2,y_3)\overline{a}(y_2)\overline{b}(y_3)c(y_1)dy_1dy_2dy_3\\
K_{M,N,O}(y_1,y_2,y_3)&=\iint_{\mathbb{R}^3}e^{-i\xi\cdot\left[y_1-y_2\right]}e^{-i\eta\cdot\left[y_2-y_3\right]}\varphi(\frac{\xi}{M})\varphi(\frac{\xi-\eta}{N})\varphi(\frac{\eta}{O})\frac{m(\xi-\eta,\eta)}{\phi_\varepsilon(\xi-\eta,\eta)}d\xi d\eta\\
&=\iint_{\mathbb{R}^3}e^{-i\xi\cdot\left[y_1-y_2\right]}e^{-i\eta\cdot\left[y_2-y_3\right]}\frac{\mathfrak{m}_{M,N,O}(\xi-\eta,\eta)}{\phi_\varepsilon(\xi-\eta,\eta)}d\xi d\eta\\
\end{split}
\end{equation*}
Changing variables $(\xi,\eta)\to(\xi,\xi-\eta)$, we may assume that $\min(M,O)/2\le N\le 2\max(M,O)$. 

We compute that
\begin{equation*}
\begin{split}
\left\vert I_{M,N,O}\right\vert&\lesssim \left\vert\sum_{A,B}\iiint_{\vert y_1-y_2\vert\sim A,\,\vert y_2-y_3\vert\sim B}K_{M,N,O}(y_1,y_2,y_3)\overline{a}(y_2)\overline{b}(y_3)c(y_1)dy_1dy_2dy_3\right\vert\\
&\lesssim \sum_{A,B}\iint_{\vert y_1-y_2\vert\sim A}\vert a(y_2)\vert \vert c(y_1)\vert \mathcal{M}b(y_2)\left[\sup_{\vert y_2-y_3\vert\sim B}B^3\vert K_{M,N,O}(y_1,y_2,y_3)\vert\right]dy_1dy_2\\
&\lesssim \left[\sum_{A,B}(AB)^3\sup_{\vert y_1-y_2\vert\sim A,\,\vert y_2-y_3\vert\sim B}\vert K_{M,N,O}(y_1,y_2,y_3)\vert\right]
\int_{\mathbb{R}^3}\vert c(y_1)\vert\cdot \mathcal{M}\left[a\cdot \mathcal{M}b\right](y_1)dy_1,
\end{split}
\end{equation*}
where $\mathcal{M}$ denotes the Maximal function\footnote{If $r=1$, we have to put the maximal function on $c$ instead of on $a\cdot\mathcal{M}b$ at the last line above, but this makes absolutely no change.}.
Using H\"older's inequality and the boundedness of the Maximal function, in order to prove \eqref{HoldMNO}, it suffices to show that
\begin{equation}\label{Sum}
\begin{split}
\sum_{A,B}(AB)^3H^{-1}(1+L)^{-5}c_{A,B}&\lesssim 1,\quad
c_{A,B}=\sup_{\vert y_1-y_2\vert\sim A,\,\vert y_2-y_3\vert\sim B}\vert K_{M,N,O}(y_1,y_2,y_3)\vert.
\end{split}
\end{equation}

We observe that
\begin{equation}\label{BoundM}
\left\vert\partial^\alpha_\xi\partial^\beta_\eta\frac{\mathfrak{m}_{M,N,O}(\xi-\eta,\eta)}{\phi_\varepsilon(\xi-\eta,\eta)}\right\vert\lesssim
\begin{cases}
H\vert\xi\vert^{-\vert\alpha\vert}\vert\eta\vert^{-\vert\beta\vert}&\hbox{ if }L\le 1\\
HL^{-1}\left[\theta+L^{-1}\right]^{-(\vert\alpha\vert+\vert\beta\vert+2)}\vert\xi\vert^{-\vert\alpha\vert}\vert\eta\vert^{-\vert\beta\vert}&\hbox{ if }L\ge 1
\end{cases}
\end{equation}
where $\theta=\vert \angle(\xi,\eta)\vert$.
This follows from the fact that the left-hand side above can be written as a linear combinations of terms like
\begin{equation*}
\frac{\partial_\xi^{\gamma_1}\partial_\eta^{\delta_1}\mathfrak{m}_{M,N,O}}{\phi_\varepsilon}\frac{\partial_\xi^{\gamma_2}\partial_\eta^{\delta_2}\phi_\varepsilon}{\phi_\varepsilon}\dots\frac{\partial_\xi^{\gamma_k}\partial_\eta^{\delta_k}\phi_\varepsilon}{\phi_\varepsilon},\qquad \gamma_i,\delta_i\ge0,\,\,\gamma_1+\dots+\gamma_k=\alpha,\,\,\delta_1+\dots+\delta_k=\beta
\end{equation*}
which is easily seen by induction on $\vert\alpha\vert+\vert\beta\vert$ and from the bounds
\begin{equation*}
\begin{split}
&\vert \phi_\varepsilon(\xi-\eta,\eta)\vert\gtrsim\langle L\rangle\left[\theta^2+\langle L\rangle^{-2}\right],\qquad\vert \partial_\xi^\alpha\partial^\beta_\eta \mathfrak{m}_{M,N,O}\vert\lesssim H\vert\xi\vert^{-\vert\alpha\vert}\vert\eta\vert^{-\vert\beta\vert}\\
&\left\vert\frac{\partial^\alpha_\xi\partial^\beta_\eta\phi_\varepsilon}{\phi_\varepsilon}\right\vert\lesssim
\begin{cases}
\vert\xi\vert^{-\vert\alpha\vert}\vert\eta\vert^{-\vert\beta\vert}&\text{if }\,L\le 1\\
\vert\xi\vert^{-\vert\alpha\vert}\vert\eta\vert^{-\vert\beta\vert}\left[\theta+\vert L\vert^{-1}\right]^{-\vert\alpha\vert-\vert\beta\vert}&\text{if }\,L\ge 1.
\end{cases}
\end{split}
\end{equation*}
These bounds follow from elementary but lengthy computations which we omit\footnote{Remark however that one can easily obtain worst bounds which would allow for a similar H\"older estimate on the pseudo-product but with a worst loss in derivative (but still loosing less than $20$ derivatives). This would be sufficient to obtain the main result assuming more derivatives on the initial data.}.

%
From \eqref{BoundM}, we compute that
\begin{equation*}\label{EstimCoeff}
\begin{split}
A^aB^b\vert c_{AB}\vert&\lesssim \sup_{\vert\alpha\vert=a;\,\vert\beta\vert=b}\left\vert\iint_{\mathbb{R}^3}e^{-i\xi\cdot\left[y_1-y_2\right]}e^{-i\eta\cdot\left[y_2-y_3\right]}\partial^\alpha_\xi\partial^\beta_\eta \frac{\mathfrak{m}_{M,N,O}(\xi-\eta,\eta)}{\phi_\varepsilon (\xi,\eta)} d\xi d\eta\right\vert\\
&\lesssim
\begin{cases}HM^{3-a}O^{3-b}&\hbox{ if }L\le 1,\\
HL^{a+b-1}M^{3-a}O^{3-b}&\hbox{ if }L\ge 1,
\end{cases}
\end{split}
\end{equation*}
from which \eqref{Sum} follows easily, choosing $a,b=2$ or $4$,
\begin{equation*}
\begin{split}
\sum_{A,B}(AB)^3H^{-1}\langle L\rangle^{-5}c_{A,B}&\lesssim \sum_{A,B}(AM/\langle L\rangle)^{3-a}(BO/\langle L\rangle)^{3-b}\lesssim 1.\\
\end{split}
\end{equation*}
This finishes the proof of \eqref{HoldMNO}. Estimate \eqref{LH} then follows directly by summing \eqref{HoldMNO} when $M\sim O\ge N$.
\end{proof}

\end{document}